\newcommand{\R}{\mathbb R}
\newcommand{\E}{\mathbb E}
\newcommand{\var}{\mathrm{var}}
\newcommand{\cov}{\mathrm{cov}}
\newcommand{\tr}{\mathrm{tr}}
\newcommand{\e}{\mathrm e}
\newcommand{\dt}[1]{\frac{\partial #1}{\partial t}}
\newcommand{\dive}{\mathop{\rm div}\nolimits}
\newcommand{\comment}[1]{}
\DeclareMathOperator{\Ent}{Ent}
\DeclarePairedDelimiter{\abs}{\lvert}{\rvert}
\DeclarePairedDelimiter\norm{\lVert}{\rVert}
\newtheorem{thm}{Theorem}
\newtheorem{lem}[thm]{Lemma}
\theoremstyle{remark}
\theoremstyle{definition}
\newenvironment{rmk}{\textit{Remark :}}{}
\author{Pierre Bizeul}
\affil{Institut de Mathématiques de Jussieu-Paris Rive Gauche}
\title{Entropy and Information jump for log-concave vectors}
\begin{document}
\maketitle
\begin{abstract}
    We extend the result of Ball and Nguyen on the jump of entropy under convolution for log-concave random vectors. We show that the result holds for any pair of vectors (not necessarily identically distributed) and that a similar inequality holds for the Fisher information, thus providing a quantitative Blachmann-Stam inequality 
\end{abstract}

\section{Introduction}

Let $X$ be a random vector distributed according to a measure $\mu$ in $\R^d$, with density $f$ with respect to the Lebesgue measure. We will denote this by $X\sim \mu = fd\lambda$. If $\int f\vert\log f\vert < +\infty$, we define its entropy by
$$\Ent_L(\mu) = \Ent_L(X) = - \int_{\R^d} f\log f$$,
where the subscript $L$ stands for "Lebesgue".

It should be noted that this entropy can be either positive or negative, and that for any invertible matrix $A$, $\Ent_L(AX) = \Ent_L(X) + \log (\abs{\det A})$. Entropy is also translation invariant, and it is classical that, when the covariance matrix is fixed, the Gaussian distribution  maximizes entropy.  It will be useful to normalize vectors so that they are centered, and have covariance matrix identity. Such a vector, as well as its distribution, is called \textit{isotropic}.

The classical Shannon-Stam inequality asserts that taking a convolution increases entropy: for two iid random vectors $X_1$ and $X_2$
$$\Ent_L(X_1)\leq \Ent_L(\frac{X_1+X_2}{\sqrt{2}}).$$
Moreover, there is equality if and only if $X$ has a Gaussian distribution. Now, one can wonder if that equality case is stable, meaning if the \textit{entropy jump} $\Ent_L(\frac{X_1+X_2}{\sqrt{2}}) - \Ent_L(X) $ is small, does it imply that $X$ is almost Gaussian ? The general answer is no, as one can convince himself by considering a well chosen double-bump Gaussian \cite{courtade2018quantitative}.


However, when the distribution of $X$ admits a spectral gap, excluding double-bumped type distributions, some positive answers exist. Recall that $X$ is said to have a spectral gap, or equivalently satisfy a Poincaré inequality, if there exists a constant $c>0$ such that for any smooth enough function $f$, the variance of $f(X)$ can be controlled in terms of the euclidean norm of $\nabla f(x)$ as follows:
$$\var{f(X)} \leq c \, \E \left[ \, \vert\nabla f(X)\vert^2  \, \right] .$$
The smallest such constant $c$ will be denoted $c_X$ and called the Poincaré constant of $X$. Under a spectral gap assumption, it was proven by Ball, Barthe and Naor in \cite{ball2003entropy} that for a one dimensional isotropic random variable $X$,
$$\Ent_L\left(\frac{X_1+X_2}{\sqrt{2}}\right) - \Ent_L(X_1) \geq \frac{1}{2(1+c_X)}(\Ent_L(G) - \Ent_L(X_1))  $$,
where $G$ is a standard Gaussian.

We can rewrite the right-hand side as a Kullback-Leibler divergence. Recall that, if $X$ is isotropic,
$$\Ent_L(G) - \Ent_L(X) = D(X\vert\vert G) = \int_{\R^d}f_\gamma \log(f_\gamma)d\gamma \geq 0 $$,
where $f_\gamma$ is the relative density of $X$ with respect to the Gaussian measure $\gamma$, that is $X\sim f_\gamma d\gamma$ and $G\sim \gamma$. In the sequel we shall use the notation $D(X) = D(X\vert\vert G)$. This is a strong measure of closeness to the Gaussian; for instance the Pinsker-Csiszar-Kullback inequality states that 
$$\left(\int_{\R^d}\vert f - g\vert\right)^2 \leq \frac{1}{2} D(X\vert\vert G)$$,
where $f$ and $g$ are the density of $X$ and $G$, respectively.
In 2012, Ball and Nguyen generalized the result to arbitrary dimension, assuming log-concavity of $X$ (\cite{ball2012entropy}). They use a semigroup approach, differentiating twice the entropy along the Ornstein-Uhlenbeck semigroup.

The Fisher information of a random vector with smooth density $f$ is

$$I_L(X) =\int \frac{\abs{\nabla f}^2}{f} = 4 \int \abs{\nabla\left(\sqrt{f}\right)}^2  ,$$
whenever those integrals are well defined. As for the entropy, for a fixed covariance matrix, the Gaussian is extremal; in this case, it has the smallest information. Information is classically the derivative of the entropy along the semi-group. In the spirit of the Shannon-Stam Inequality, the Blachman-Stamn inequality asserts that taking a convolution decreases the information:

$$I_L(X_1) \geq I_L\left(\frac{X_1 + X_2}{\sqrt 2}\right). $$

As before, we can define a relative information, notably to the Gaussian measure $d\gamma$. If $X \sim f_\gamma d\gamma$ is a random vector with smooth density, we will denote its relative information to $d\gamma$ by

$$I(X\vert\vert G) = I_\gamma(X) = \int \frac{\abs{\nabla f_\gamma}^2}{f_\gamma} d\gamma. $$ 
When $X$ is isotropic, integrating by parts yields:

$$I(X\vert\vert G) = I_L(X) - d = I_L(X) - I_L(G) .$$

Consequently, for a measure $\mu$ on $\R^d$, we write $D(\mu\vert\vert \gamma) = D(X\vert\vert G), \  I_L(\mu) = I_L(X)$ and $I(\mu\vert\vert \gamma) = I(X\vert\vert G)$ where $X\sim\mu$ is a random vector distributed according to $\mu$

In this note, we use the same strategy as in \cite{ball2012entropy}, but improve their result in two directions. First we generalize it to non identically distributed pairs of random vectors. For two measures $\mu$ and $\nu$ define

$$\delta_{E,\lambda}(\mu,\nu) = \Ent_L ( X_\lambda ) -(1-\lambda ) \Ent_L ( X_0 ) 
 - \lambda \Ent_L ( X_1 )   $$,
where $X_0$ and $X_1$ are independent random vectors distributed according to $\mu$ and $\nu$ respectively, and $X_\lambda = \sqrt{1-\lambda}X_0 + \sqrt{\lambda}X_1$. The Shannon-Stam inequality asserts that $\delta_\lambda(\mu,\nu) \geq 0$ and this quantity is precisely the deficit in the Shannon-Stam inequality.

\begin{thm}[Quantitative Shannon-Stam]{\label{thm:qss}}
Let $\mu,\nu$ be two log-concave isotropic measures with Poincaré constant respectively $c_0$ and $c_1$, and $\lambda \in [0,1]$. Then,
\[ 
\delta_{E,\lambda}(\mu,\nu)  
 \geq \frac {\lambda(1-\lambda)}{ 4 \max( c_0,c_1)} \left( D(\mu\vert\vert \gamma ) + D(\nu\vert\vert \gamma ) \right)
\]
\end{thm}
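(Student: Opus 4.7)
The plan is to follow the Ornstein--Uhlenbeck semigroup strategy of \cite{ball2012entropy}, adapted to the asymmetric setting. Let $G_0, G_1$ be standard Gaussians independent of $(X_0, X_1)$, and set $X_i(t) = \e^{-t}X_i + \sqrt{1-\e^{-2t}}\,G_i$ for $i\in\{0,1\}$. A direct computation gives $X_\lambda(t) := \sqrt{1-\lambda}\,X_0(t) + \sqrt{\lambda}\,X_1(t) = \e^{-t}X_\lambda + \sqrt{1-\e^{-2t}}\,G$ with $G := \sqrt{1-\lambda}\,G_0 + \sqrt{\lambda}\,G_1$ itself a standard Gaussian, so $X_\lambda(t)$ is the OU evolution of $X_\lambda$. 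Each $X_i(t)$ remains isotropic, and de Bruijn's identity along the isotropic OU flow reads $\frac{d}{dt}\Ent_L(X_i(t)) = I(X_i(t)\|\gamma)$, hence
\[
\frac{d}{dt}\Bigl(\Ent_L(X_\lambda(t)) - (1-\lambda)\Ent_L(X_0(t)) - \lambda\Ent_L(X_1(t))\Bigr) = -\Delta I(t),
\]
where $\Delta I(t) := (1-\lambda)\,I(X_0(t)\|\gamma) + \lambda\,I(X_1(t)\|\gamma) - I(X_\lambda(t)\|\gamma) \geq 0$ is the Fisher deficit at time $t$, nonnegative by the relative-Fisher form of Blachman--Stam. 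Since the bracketed quantity vanishes at $t = \infty$,
\[
\delta_{E,\lambda}(\mu,\nu) = \int_0^{\infty} \Delta I(t)\,dt.
\]

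The heart of the proof is then a pointwise-in-$t$ bound of the form
\[
\Delta I(t) \geq \frac{\lambda(1-\lambda)}{4\,\max(c_0(t),c_1(t))}\bigl(I(X_0(t)\|\gamma) + I(X_1(t)\|\gamma)\bigr),
\]
where $c_i(t)$ denotes the Poincaré constant of $X_i(t)$. Log-concavity is preserved by the OU flow (Prékopa--Leindler), and Poincaré constants are subadditive on independent sums, giving $c_i(t) \leq \e^{-2t}c_i + (1-\e^{-2t}) \leq c_i$; the second inequality uses that any isotropic measure satisfies $c \geq 1$ (test the Poincaré inequality against a coordinate). Hence $\max(c_0(t), c_1(t)) \leq \max(c_0, c_1)$ uniformly in $t$.

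Granted the pointwise inequality, integrating and using $\int_0^\infty I(X_0(t)\|\gamma)\,dt = D(\mu\|\gamma)$ and $\int_0^\infty I(X_1(t)\|\gamma)\,dt = D(\nu\|\gamma)$ --- each a consequence of $\frac{d}{dt}D(X_i(t)\|\gamma) = -I(X_i(t)\|\gamma)$ combined with $D(X_i(\infty)\|\gamma) = 0$ --- yields the announced lower bound.

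The principal obstacle is the pointwise Fisher-deficit inequality. I would begin from the score representation. Writing $\sigma_i := \nabla \log f_{\gamma,i}(X_i)$ for the relative score of $X_i$ (so that $I(X_i\|\gamma) = \E\,\abs{\sigma_i}^2$), the projection formula for the score of a sum gives $s_{X_\lambda}(y) + y = \sqrt{1-\lambda}\,\E[\sigma_0 \,|\, X_\lambda = y] + \sqrt{\lambda}\,\E[\sigma_1 \,|\, X_\lambda = y]$; independence of $X_0, X_1$ kills the cross term $\E[\sigma_0 \cdot \sigma_1] = 0$, and a short calculation produces the conditional-variance representation
\[
\Delta I = \E\,\bigl|\sqrt{1-\lambda}\,\sigma_0 + \sqrt{\lambda}\,\sigma_1 - \E[\sqrt{1-\lambda}\,\sigma_0 + \sqrt{\lambda}\,\sigma_1 \,|\, X_\lambda]\bigr|^{2}.
\]
The task is to bound this from below using the Poincaré inequality applied to $X_0$ and $X_1$ separately; the factor $\lambda(1-\lambda)$ should emerge from the symmetric weighting in the decomposition while $1/\max(c_0,c_1)$ comes from selecting the worse Poincaré constant when combining the two steps. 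Carrying this through in the asymmetric, non-i.i.d.\ setting is the technical heart of the argument; in the symmetric case $\mu=\nu$, $\lambda=1/2$ it reduces to the variance identity of \cite{ball2012entropy}.
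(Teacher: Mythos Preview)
Your reduction is the same as the paper's final step: write $\delta_{E,\lambda}=\int_0^\infty \Delta I(t)\,dt$ and invoke the pointwise Fisher-deficit bound, which is exactly Theorem~\ref{thm:qbs} applied at time $t$; one more integration in $t$ gives Theorem~\ref{thm:qss}. The observations about commutation of the OU flow with the convolution, and about $c_i(t)\le c_i$, are also correct and used verbatim in the paper.

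The genuine gap is your proposed route to the pointwise bound. You express $\Delta I$ as the conditional variance of the combined relative score given $X_\lambda$ and hope to lower-bound it by ``Poincar\'e applied to $X_0$ and $X_1$ separately''. But Poincar\'e gives \emph{upper} bounds on variances; moreover the conditioning is on $X_\lambda$, which mixes $X_0$ and $X_1$, so what one would actually need is a Poincar\'e inequality for the conditional law of $X_\lambda^\perp=-\sqrt{\lambda}X_0+\sqrt{1-\lambda}X_1$ given $X_\lambda$, and there is no reason that constant is controlled by $\max(c_0,c_1)$. Your remark that in the symmetric case this ``reduces to the variance identity of \cite{ball2012entropy}'' is also off: the Ball--Nguyen identity is about \emph{Hessians}, not scores. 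The paper does not attack $\Delta I$ at the score level at all. It differentiates once more along the flow, using $e^{2t}\frac{d}{dt}\bigl(e^{-2t}I(X_t)\bigr)=-2M(X_t)$ with $M=K+2I$ and $K(X)=\E\norm{\nabla^2\varphi(X)}^2$. The Ball--Nguyen projection lemma yields the matrix inequality $\sigma_L(X_\lambda)\le \E\bigl[(1-\lambda)\sigma_L(X_0)+\lambda\sigma_L(X_1)\mid X_\lambda\bigr]$, whence $(1-\lambda)M_0+\lambda M_1-M_\lambda\ge \lambda(1-\lambda)\,\E\norm{\sigma(X_0)-\sigma(X_1)}^2$. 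Poincar\'e is then applied, conditionally on $X_1$, to the centred function $x\mapsto \nabla\varphi_0(x)-\nabla^2\varphi_1(X_1)\,x$, which is exactly the direction in which Poincar\'e goes the right way: it converts the Hessian-difference lower bound into $\frac{1}{c_0}(I(X_0)+K(X_1))$, and symmetrizing gives Lemma~\ref{lem:L2}. Integrating this second-order inequality from $0$ to $\infty$, together with $I_i(t)\le e^{-2t}I_i(0)$, yields Theorem~\ref{thm:qbs}. The missing idea in your sketch is precisely this passage to the second derivative and the Hessian comparison; the score-level conditional-variance route does not close on its own.
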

This should be compared with a recent result of Eldan and Mikulincer (\cite{DBLP:journals/corr/abs-1903-07140}, Theorem 3). They get a more general result, allowing $\mu$ and $\nu$ to have different covariance matrices, but in the case where $\mu$ and $\nu$ have the same covariance matrix, they get a worst dependence on the Poincaré constant.

Secondly, we get a same kind of inequality for the information, yielding a stability result for the Blachman-Stam inequality. Define this time the information deficit of a pair of measures by

$$\delta_{I,\lambda}(\mu,\nu) = (1-\lambda ) I_L ( X_0 ) 
 + \lambda I_L ( X_1 ) - I_L ( X_\lambda )$$
where $X_0$ and $X_1$ are independent random vectors distributed according to $\mu$ and $\nu$ respectively, and $X_\lambda = \sqrt{1-\lambda}X_0 + \sqrt{\lambda}X_1$

\begin{thm}[Quantitative Blachman-Stam]{\label{thm:qbs}}
Let $\mu,\nu$ be two log-concave isotropic measures with Poincaré constant respectively $c_0$ and $c_1$, and $\lambda \in [0,1]$. Then,
\[ 
\delta_{I,\lambda}(\mu,\nu)   
 \geq \frac {\lambda(1-\lambda)}{ 4 \max( c_0,c_1)} \left( I ( \mu\vert\vert d\gamma ) + I ( \nu\vert\vert d\gamma  ) \right) . 
\]
\end{thm}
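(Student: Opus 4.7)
The plan is to mirror the Ball-Nguyen semigroup argument that proves Theorem~\ref{thm:qss}, adapted from entropy to Fisher information. First, introduce the Ornstein-Uhlenbeck flows $X^i_s = e^{-s}X_i + \sqrt{1-e^{-2s}}Z_i$ with $Z_0, Z_1$ independent standard Gaussians independent of $(X_0, X_1)$. The key commutation is $\sqrt{1-\lambda}X^0_s + \sqrt{\lambda}X^1_s \stackrel{d}{=} e^{-s}X_\lambda + \sqrt{1-e^{-2s}}Z$ with $Z\sim N(0,\Id)$, so OU intertwines with the weighted sum operation $(\mu,\nu)\mapsto \mu_\lambda$. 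OU also preserves log-concavity and isotropy, and contracts Poincaré constants of isotropic log-concave measures towards $1$; in particular $c(\mu_s),c(\nu_s)\le \max(c_0,c_1)$, so smoothing via OU and letting $s\to 0$ at the end costs nothing.

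The core computation starts from the Stam decomposition
\[
\nabla\log f_{X_\lambda}(X_\lambda) = \sqrt{1-\lambda}\,\E[\nabla\log f_\mu(X_0)\mid X_\lambda] + \sqrt{\lambda}\,\E[\nabla\log f_\nu(X_1)\mid X_\lambda].
\]
Writing $S_\mu := \nabla\log f_\mu(\cdot)+\cdot$ for the score of $\mu$ relative to the standard Gaussian (so that $\E|S_\mu(X_0)|^2 = I(\mu\|\gamma)$), and using $\E[S_\mu(X_0)\cdot S_\nu(X_1)]=0$ by independence together with a short expansion, one obtains the identity
\[
\delta_{I,\lambda}(\mu,\nu) = \E\bigl[\var\bigl(\sqrt{1-\lambda}\,S_\mu(X_0) + \sqrt{\lambda}\,S_\nu(X_1) \bigm| X_\lambda\bigr)\bigr].
\]
The task reduces to bounding this conditional variance below by $C\bigl(\E|S_\mu|^2+\E|S_\nu|^2\bigr)$ with $C=\lambda(1-\lambda)/(4\max(c_0,c_1))$. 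The conditional distribution of $(X_0,X_1)$ given $X_\lambda=y$ is log-concave on a $d$-dimensional affine subspace, and one applies a Poincaré / Brascamp-Lieb inequality for this conditional with test functions tied to the relative scores $S_\mu,S_\nu$ themselves, using log-concavity and isotropy via integration by parts to relate the resulting Dirichlet form to $I(\mu\|\gamma)+I(\nu\|\gamma)$.

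\textbf{Main obstacle.} The principal difficulty is converting the conditional variance into a \emph{lower} bound of the right form: Poincaré naturally produces upper bounds on variance, so one must either invoke log-concavity in a Brascamp-Lieb / reverse direction, or select test functions so that the Dirichlet side of a conditional Poincaré inequality matches $I(\mu\|\gamma)+I(\nu\|\gamma)$ up to the desired constant. The sharp denominator $\max(c_0,c_1)$, rather than $c_0+c_1$, likely requires an asymmetric splitting that absorbs both conditional-variance contributions into the larger of the two Poincaré constants; this is the non-iid adaptation of Ball-Nguyen's key lemma and constitutes the main technical content of the theorem.
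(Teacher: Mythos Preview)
Your conditional-variance identity for $\delta_{I,\lambda}$ via the Stam projection is correct, but the obstacle you flag is real and your outline does not overcome it: there is no Brascamp--Lieb or reverse-Poincar\'e mechanism that will lower-bound $\E\bigl[\var(\cdot\mid X_\lambda)\bigr]$ by a fixed multiple of $I(\mu\|\gamma)+I(\nu\|\gamma)$ for general log-concave $\mu,\nu$. Working at the level of scores, Poincar\'e only gives upper bounds on that conditional variance, and log-concavity of the conditional law provides no useful lower bound here. So as written the proposal stops exactly where the actual work begins.

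The paper avoids this dead end by going to \emph{second order}. Instead of controlling $\delta_{I,\lambda}$ directly, one differentiates $e^{-2t}I(X_t)$ along the Ornstein--Uhlenbeck flow, which produces the quantity $M(X)=K(X)+2I(X)$ with $K(X)=\E\|\nabla^2\varphi(X)\|_{HS}^2$ (Hessian of the log-density relative to $\gamma$). At this Hessian level the Ball--Nguyen projection lemma gives the pointwise matrix inequality
\[
\sigma_L(X_\lambda)\ \le\ \E\bigl[(1-\lambda)\sigma_L(X_0)+\lambda\sigma_L(X_1)\,\big|\,X_\lambda\bigr],
\]
and log-concavity (positivity of all three matrices) lets one pass to Hilbert--Schmidt norms and obtain
\[
(1-\lambda)M(X_0)+\lambda M(X_1)-M(X_\lambda)\ \ge\ \lambda(1-\lambda)\,\E\|\sigma(X_0)-\sigma(X_1)\|^2.
\]
The crucial point is that now Poincar\'e is used in its \emph{standard} direction: conditioning on $X_1$ and applying the Poincar\'e inequality for $X_0$ to the centered test function $x_0\mapsto \nabla\varphi_0(x_0)-\nabla^2\varphi_1(X_1)\,x_0$ (whose gradient is $\nabla^2\varphi_0(x_0)-\nabla^2\varphi_1(X_1)$) yields
\[
\E\|\sigma(X_0)-\sigma(X_1)\|^2\ \ge\ \tfrac{1}{c_0}\bigl(I(X_0)+K(X_1)\bigr),
\]
and symmetrizing gives the $\max(c_0,c_1)$ denominator you were worried about. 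Integrating the resulting differential inequality for $e^{-2t}I_\lambda(t)$ from $0$ to $\infty$, together with the decay $I_i(t)\le e^{-2t}I_i(0)$, finishes the proof. In short: the missing idea is to differentiate once more so that Poincar\'e can be applied the right way around, with the Hessian playing the role of the gradient and the score playing the role of the test function.
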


In the sequel, quantities computed with respect to the Lebesgue measure have a subscript $``L"$ while quantities that are computed with respect to the Gaussian measure have none.

\section{A lemma of Ball-Nguyen}
Let $X$ be a random vector with smooth density $f = e^{-\psi}$ with respect to the \textbf{Lebesgue} measure. We define $\sigma_L(X)$ to be the random matrix $\sigma_L(X):= \nabla^2(\psi)(X)$ and, we denote

$$K_L(X) = \E \left[\norm{\sigma_L(X)}^2\right] ,$$
where $\norm{.}$ denotes the Hilbert-Schmidt norm on matrices and the subscripts $L$ yet again stands for "Lebesgue", which we take temporarily as the reference measure. Understanding this quantitiy will prove to be important later on, as it will appear in the second derivative of the entropy along the Ornstein-Uhlenbeck semigroup.

We recall a lemma of Ball-Nguyen \cite{ball2012entropy}, for which we provide a simple proof.

\begin{lem}{(Ball-Nguyen)}
Let $X$ be a random vector in $\R^d$ with smooth density, $E\subset\R^d$ be any subspace, $p_E$ be the orthogonal projection onto $E$ and $X_E = p_E(X)$. Then
$$\sigma_L(X_E) \leq p_E \E\left[\sigma_L(X) \vert X_E\right]p_E^*, \quad a.s ,$$
for the partial order on symmetric matrices.
\end{lem}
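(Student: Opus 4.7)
Without loss of generality I would choose coordinates so that $E = \R^k \times \{0\} \subset \R^d$, write points as $x = (y,z)$ with $y \in E$ and $z \in E^\perp$, and express the marginal density of $X_E$ as
\[
f_E(y) = \int_{E^\perp} e^{-\psi(y,z)}\,dz.
\]
Setting $\psi_E = -\log f_E$, the goal reduces to showing that $\nabla^2 \psi_E(y)$ is bounded above by the $E$-block of $\E[\nabla^2 \psi(X) \mid X_E = y]$. The strategy is to simply compute $\nabla \psi_E$ and $\nabla^2 \psi_E$ by differentiating under the integral, and to recognize a conditional covariance at the end.

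First I would differentiate $f_E$ once to get
\[
\nabla_y \psi_E(y) = \E\bigl[\nabla_y \psi(X) \,\big|\, X_E = y\bigr],
\]
the familiar formula for the score of a marginal as the conditional expectation of the joint score. Then I would differentiate the identity $f_E(y)\,\nabla_y\psi_E(y) = \int \nabla_y\psi(y,z)\,e^{-\psi(y,z)}\,dz$ a second time in $y$, using the product rule on the left and differentiation under the integral on the right. After dividing by $f_E(y)$ and using $\nabla_y \log f_E = -\nabla_y \psi_E$, this should yield
\[
\nabla^2 \psi_E(y) \;=\; \E\bigl[\nabla_y^2 \psi(X) \,\big|\, X_E = y\bigr] \;-\; \cov\bigl(\nabla_y \psi(X) \,\big|\, X_E = y\bigr),
\]
where the conditional covariance is taken of the $E$-component of the score.

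The conclusion follows immediately, since the conditional covariance matrix is positive semidefinite, so
\[
\nabla^2\psi_E(y) \;\leq\; \E\bigl[\nabla_y^2\psi(X) \,\big|\, X_E = y\bigr] \;=\; p_E\,\E\bigl[\nabla^2\psi(X) \,\big|\, X_E = y\bigr]\,p_E^*,
\]
which is the asserted inequality. I do not expect a real obstacle here; the only thing to be a bit careful about is justifying differentiation under the integral sign (using the smoothness assumption on $f$ and standard decay of a density and its derivatives), and bookkeeping the fact that the term subtracted on the right is exactly the conditional covariance of the score rather than the raw second moment. The conceptual content is that marginalization replaces the Hessian of the potential by a conditional expectation of it, minus the variance of the fluctuations of the score along the fibers, and it is this extra non-negative term that produces the inequality.
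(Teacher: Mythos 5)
Your proposal is correct and follows essentially the same route as the paper: both compute the Hessian of the marginal potential by differentiating under the integral and identify the correction term as the conditional covariance of the score along the fibers over $E$, whose positive semidefiniteness gives the inequality. The paper phrases this via the conditional measure $d\nu_x$ on $E^\perp$, but the identity $\nabla^2\psi_E = \E[\nabla^2_{xx}\psi \mid X_E] - \cov(\nabla_x\psi \mid X_E)$ is identical to yours.
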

\begin{proof}
Let $\psi_E : E \xrightarrow[]{} \R$ be such that $X_E \sim e^{-\psi_E(x)}dx$. We have for $x\in E$ $$\psi_E(x) = - \ln \int_{E^\perp} e^{-\psi(x,y)}dy$$
Then, setting $d\nu_x = \frac{e^{-\psi(x,y)}}{\int_{E^\perp} e^{-\psi(x,y)}dy}dy$, a straightforward computation shows that:

$$\forall x \in E,\quad \nabla^2\psi_E(x) = \int_{E^\perp}\nabla^2_{xx}\psi d\nu_x - \cov_{d\nu_x}(\nabla_x\psi) $$
In particular, 
$$\forall x \in E,\quad \nabla^2\psi_E(x) \leq \int_{E^\perp}\nabla^2_{xx}\psi d\nu_x $$
which is the desired result
\end{proof}

If $X_0$ and $X_1$ are two independent random vectors in $\R^d$ and $\lambda \in [0,1]$, applying the previous lemma to the random vector $(X_0,X_1)$ and the projection $p(x,y) =\sqrt{1-\lambda}x + \sqrt{\lambda}y $ yields:
\begin{lem}{\label{lem:ineq_sigma}}
For any independent random vectors $X_0$, $X_1$ in $\R^d$, with smooth densities, and any $\lambda \in [0,1]$
$$\sigma_L(X_\lambda) \leq \E\left[(1-\lambda)\sigma_L(X_0) + \lambda \sigma_L(X_1) \vert X_\lambda\right]\quad  a.s,$$
where $X_\lambda = \sqrt{1-\lambda}X_0 + \sqrt{\lambda} X_1.$
\end{lem}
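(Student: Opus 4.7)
The plan is to apply the previous Ball--Nguyen lemma, in dimension $2d$, to the product random vector $Z = (X_0, X_1) \in \R^{2d}$ and to a well-chosen $d$-dimensional subspace $E \subset \R^{2d}$ whose associated orthogonal projection corresponds, after an isometric identification, to the map $(x,y) \mapsto \sqrt{1-\lambda}\, x + \sqrt{\lambda}\, y$.

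Concretely, letting $(e_i)_{i=1}^d$ denote the standard basis of $\R^d$, I would set $u_i = (\sqrt{1-\lambda}\, e_i, \sqrt{\lambda}\, e_i) \in \R^{2d}$. Since $\norm{u_i}^2 = (1-\lambda) + \lambda = 1$ and $\langle u_i, u_j\rangle = 0$ for $i\neq j$, the family $(u_i)$ is orthonormal; define $E = \mathrm{span}(u_1,\dots,u_d)$. Through the isometry $E \cong \R^d$ reading off coordinates in the basis $(u_i)$, the orthogonal projection $p_E(x,y) = \sum_i (\sqrt{1-\lambda}\, x_i + \sqrt{\lambda}\, y_i)\, u_i$ is identified with $\sqrt{1-\lambda}\, x + \sqrt{\lambda}\, y$, so that $Z_E$ corresponds precisely to $X_\lambda$.

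Independence of $X_0$ and $X_1$ with smooth densities $e^{-\psi_0}$ and $e^{-\psi_1}$ gives $Z$ the density $e^{-\psi_0(x_0)-\psi_1(x_1)}$, hence
\[
\sigma_L(Z) \;=\; \begin{pmatrix} \sigma_L(X_0) & 0 \\ 0 & \sigma_L(X_1)\end{pmatrix}.
\]
In the bases above, $p_E$ is represented by the $d\times 2d$ block matrix $(\sqrt{1-\lambda}\,\Id_d,\ \sqrt{\lambda}\,\Id_d)$, so a direct multiplication yields $p_E\, \sigma_L(Z)\, p_E^* = (1-\lambda)\sigma_L(X_0) + \lambda\, \sigma_L(X_1)$. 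Plugging this into the previous lemma and using linearity of conditional expectation:
\[
\sigma_L(X_\lambda) \;=\; \sigma_L(Z_E) \;\leq\; p_E\, \E[\sigma_L(Z)\mid Z_E]\, p_E^* \;=\; \E\bigl[(1-\lambda)\sigma_L(X_0) + \lambda\, \sigma_L(X_1)\bigm| X_\lambda\bigr].
\]

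The computation itself is a one-liner, so the only real point requiring care is the bookkeeping: verifying that the $u_i$ are genuinely orthonormal (so that $E$ is a bona fide $d$-dimensional subspace to which the previous lemma applies verbatim) and checking that, under the chosen identification, conditioning on $Z_E$ coincides with conditioning on $X_\lambda$. Once these are in place, no further analytic input beyond the previous lemma is needed.
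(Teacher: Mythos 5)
Your proposal is correct and is exactly the argument the paper intends: it applies the Ball--Nguyen lemma to $Z=(X_0,X_1)$ in $\R^{2d}$ with the subspace $E$ realizing the projection $(x,y)\mapsto\sqrt{1-\lambda}\,x+\sqrt{\lambda}\,y$, which the paper states in one line without writing out the block computation. Your verification that the $u_i$ are orthonormal, that $\sigma_L(Z)$ is block diagonal by independence, and that $p_E\,\sigma_L(Z)\,p_E^*=(1-\lambda)\sigma_L(X_0)+\lambda\sigma_L(X_1)$ fills in precisely the omitted bookkeeping.
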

If $X_0$ and $X_1$ are log-concave and independent, then so is $X_\lambda$, by Prékopa's theorem. Thus $\sigma_L(X_0), \sigma_L(X_1)$ and $\sigma(X_\lambda)$ are positive matrices, so as in Ball-Nguyen's article, the inequality above translates to an inequality on their norm.

\begin{lem}\label{lem:klebesgue}

For any log-concave independent random vectors $X_0$, $X_1$ in $\R^d$, with smooth densities, and any $\lambda \in [0,1]$
$$(1-\lambda)K_L(X_0) + \lambda K_L(X_1) - K_L(X_\lambda) \geq \lambda(1-\lambda)\E\left[\norm{\sigma_L(X_1)-\sigma_L(X_0)}^2\right] $$
\end{lem}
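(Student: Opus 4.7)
The strategy is to combine Lemma~\ref{lem:ineq_sigma} with the elementary quadratic identity
\[
(1-\lambda)\norm{A}^2 + \lambda \norm{B}^2 \;=\; \norm{(1-\lambda)A+\lambda B}^2 + \lambda(1-\lambda)\norm{A-B}^2,
\]
which holds for any symmetric matrices $A,B$ and any $\lambda\in[0,1]$, where $\norm{\cdot}$ is the Hilbert-Schmidt norm. Applying this identity to $A=\sigma_L(X_0)$, $B=\sigma_L(X_1)$ and taking expectations, the claimed inequality is equivalent to
\[
K_L(X_\lambda) \;\leq\; \E\norm{(1-\lambda)\sigma_L(X_0)+\lambda \sigma_L(X_1)}^2,
\]
since subtracting $\lambda(1-\lambda)\E\norm{\sigma_L(X_1)-\sigma_L(X_0)}^2$ from both sides of the identity produces exactly the statement of the lemma.

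To prove this reduced inequality, let $M=(1-\lambda)\sigma_L(X_0)+\lambda\sigma_L(X_1)$ and apply conditional Jensen's inequality to the convex map $A\mapsto \norm{A}^2$, which yields
\[
\E\norm{\E[M\mid X_\lambda]}^2 \;\leq\; \E\norm{M}^2.
\]
It thus suffices to establish the pointwise almost sure bound $\norm{\sigma_L(X_\lambda)}^2 \leq \norm{\E[M\mid X_\lambda]}^2$, after which the chain $K_L(X_\lambda) \leq \E\norm{\E[M\mid X_\lambda]}^2 \leq \E\norm{M}^2$ closes the argument.

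The last bound is where the log-concavity hypothesis really enters. By Prékopa's theorem the three random matrices $\sigma_L(X_0),\sigma_L(X_1),\sigma_L(X_\lambda)$ are almost surely positive semidefinite, and Lemma~\ref{lem:ineq_sigma} gives the operator inequality $0 \leq \sigma_L(X_\lambda) \leq \E[M\mid X_\lambda]$ almost surely. One then invokes the (easy but essential) fact that the squared Hilbert-Schmidt norm is monotone on the PSD cone: if $0\leq A\leq B$, conjugating $A\leq B$ by $A^{1/2}$ gives $\tr(A^2)\leq \tr(AB)$, and conjugating by $B^{1/2}$ gives $\tr(AB)\leq \tr(B^2)$, so $\norm{A}^2\leq \norm{B}^2$. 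Applied pointwise, this yields the required pointwise bound.

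The main — and really the only — delicate point is the monotonicity of $\norm{\cdot}^2$ on the PSD cone, whose use is only legitimate because log-concavity (via Prékopa) makes all the Hessians positive semidefinite. The remaining ingredients (the quadratic identity and conditional Jensen) are purely formal, so once that monotonicity step is in place the three displays above chain together to give the lemma.
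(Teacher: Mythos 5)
Your proof is correct and follows essentially the same route as the paper: Lemma~\ref{lem:ineq_sigma} plus positivity of the Hessians (via Prékopa) to pass to Hilbert--Schmidt norms, conditional Jensen, and the quadratic identity for the cross term. The only difference is cosmetic — you state the polarization identity up front and you spell out the monotonicity of $\norm{\cdot}^2$ on the PSD cone via $\tr(A^2)\leq\tr(AB)\leq\tr(B^2)$, a step the paper merely asserts.
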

\noindent
\begin{rmk}
In particular, we have :

$$K_L(X_\lambda) \leq (1-\lambda)K_L(X_0) + \lambda K_L(X_1) $$
which can be seen as a second-order Blachman-Stam inequality.

\end{rmk}

\begin{proof}
As explained, the matrices being positive, the inequality in Lemma \ref{lem:ineq_sigma} implies that:
$$\norm{\sigma_L(X_\lambda)} \leq
\norm{\E\left[(1-\lambda)\sigma_L(X_0) + \lambda \sigma_L(X_1) \vert X_\lambda\right]} $$
Taking the expectation of the square and using Jensen's inequality then implies:
\begin{align*}
    K_L(X_\lambda) &\leq \E\left[\norm{(1-\lambda)\sigma_L(X_0) + \lambda\sigma_L(X_1)}^2\right] \\
    & = (1-\lambda)^2K_L(X_0) + \lambda^2K_L(X_1) + 2\lambda(1-\lambda)\E\left[\langle \sigma_L(X_0) , \sigma_L(X_1) \rangle\right] \\
    &= (1-\lambda)K_L(X_0) + \lambda K_L(X_1) - \lambda(1-\lambda)\E\left[\norm{\sigma_L(X_0) - \sigma_L(X_1)}^2\right]
\end{align*}
since $X_0$ and $X_1$ are independent.
\end{proof}
Now we want to translate this result to the Gaussian setting. Assuming that $X$ has density $f_\gamma = e^{-\varphi}$ with respect to the Gaussian measure, we similarly introduce:

$$ \sigma(X):= \sigma_\gamma(X):= \nabla^2(\varphi)(X) $$
$$K(X):= K_\gamma(X) = \E \left[\norm{\sigma(X)}^2\right] $$
$$I(X) = I_\gamma(X) = \E\left[\norm{\nabla\varphi(X)}^2\right]. $$
Note that the definition of $I$ is consistent with the one given in the introduction.
Then Lemma \ref{lem:klebesgue} becomes:
\begin{lem}{\label{lem:L}} For any log-concave independent isotropic random vectors $X_0$, $X_1$ in $\R^d$, with smooth densities, and any $\lambda \in [0,1]$, we have 
\begin{equation}{\label{eq:L}}
    (1-\lambda)M(X_0) + \lambda M(X_1) - M(X_\lambda) \geq \lambda(1-\lambda)\E\left[\norm{\sigma(X_1)-\sigma(X_0)}^2\right] ,
\end{equation}
where $M(X) = K(X) + 2I(X).$
\end{lem}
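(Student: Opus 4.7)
The plan is to express the Lebesgue-based quantities $K_L$ and $\sigma_L$ in terms of their Gaussian analogues $K$, $\sigma$, and $I$, then substitute into Lemma \ref{lem:klebesgue} and watch the corrections cancel.

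First, I would set up the dictionary between the two reference measures. If $X$ has Lebesgue density $e^{-\psi}$ and Gaussian-relative density $e^{-\varphi}$, then $\psi(x) = \varphi(x) + \tfrac{1}{2}\abs{x}^2 + \tfrac{d}{2}\log(2\pi)$, so differentiating twice gives $\sigma_L(X) = \sigma(X) + \Id$ pointwise. In particular, $\sigma_L(X_1)-\sigma_L(X_0) = \sigma(X_1)-\sigma(X_0)$, so the right-hand side of \eqref{eq:L} is identical to the right-hand side of Lemma \ref{lem:klebesgue}. This takes care of the "hard" side of the inequality immediately.

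Next I would rewrite $K_L$. Expanding the Hilbert--Schmidt norm,
\[
K_L(X) = \E\left[\norm{\sigma(X) + \Id}^2\right] = K(X) + 2\,\E\left[\tr \sigma(X)\right] + d = K(X) + 2\,\E[\Delta\varphi(X)] + d.
\]
The task is then to show that, for $X$ isotropic, $\E[\Delta\varphi(X)] = I(X)$. This is where I would use two integrations by parts. On the Gaussian side, with $\mathcal{L}\varphi = \Delta\varphi - x\cdot\nabla\varphi$ the Ornstein--Uhlenbeck generator, $\int \mathcal{L}\varphi \cdot e^{-\varphi}\,d\gamma = -\int \nabla\varphi\cdot\nabla e^{-\varphi}\,d\gamma = I(X)$, so $\E[\Delta\varphi(X)] - \E[X\cdot\nabla\varphi(X)] = I(X)$. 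On the Lebesgue side, integration by parts and isotropy give $\E[X\cdot\nabla\psi(X)] = d$, which combined with $\nabla\psi = \nabla\varphi + x$ and $\E[\abs{X}^2]=d$ yields $\E[X\cdot\nabla\varphi(X)] = 0$. Hence $K_L(X) = K(X) + 2I(X) + d = M(X) + d$ whenever $X$ is isotropic.

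Finally, I would observe that if $X_0$ and $X_1$ are independent and isotropic, so is $X_\lambda = \sqrt{1-\lambda}X_0 + \sqrt\lambda X_1$, because $\cov(X_\lambda) = (1-\lambda)\Id + \lambda\Id = \Id$. Applying the identity $K_L = M + d$ to all three vectors and substituting into Lemma \ref{lem:klebesgue}, the constant term enters with coefficient $(1-\lambda) + \lambda - 1 = 0$ and drops out, leaving exactly \eqref{eq:L}. The only real calculation is the identity $\E[\Delta\varphi(X)] = I(X)$, and the mild obstacle there is ensuring boundary terms vanish in the two integrations by parts; under log-concavity and smoothness this is standard, so the whole argument reduces to bookkeeping once the Lebesgue-to-Gaussian correspondence is set up.
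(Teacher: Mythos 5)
Your proposal is correct and follows essentially the same route as the paper: translate Lemma \ref{lem:klebesgue} via $\sigma_L = \sigma + \Id$, note the right-hand sides coincide, and show $K_L(X) = M(X) + d$ for isotropic $X$ by an integration by parts. The only cosmetic difference is that the paper computes $\E[\tr\sigma_L(X)] = \E[\norm{\nabla\psi(X)}^2] = I(X)+d$ directly on the Lebesgue side, whereas you route through the Ornstein--Uhlenbeck generator and verify the cross term $\E[X\cdot\nabla\varphi(X)]=0$ explicitly --- a step the paper leaves implicit, so your version is if anything slightly more careful.
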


\begin{proof}
Let $X$ be an isotropic log-concave random vector with density $e^{-\varphi}$ with respect to the Gaussian measure, and $e^{-\psi}$ with respect to the Lebesgue measure. By definition we have:

$$\sigma(X) = \sigma_L(X) - Id.$$
Hence, 
$$K(X) = K_L(X) - 2\E\left[\tr{(\sigma_L(X))}\right] + n. $$
Now, by integration by parts:
\begin{align*}
 \E\left[\tr{(\sigma_L(X))}\right] &=\int_{\R^n}\dive{(\nabla \psi)}(x) e^{-\psi(x)}dx  \\
 &= \int_{\R^n}\nabla \psi(x)\cdot\nabla \psi(x) e^{-\psi(x)}dx \\
 & = \E\left[\norm{\nabla\psi(X)}^2\right] = \E\left[\norm{\nabla\varphi(X) + X}^2\right] = I(X) + n.
\end{align*}
The lemma follows.
\end{proof}

The next lemma provides a lower bound for the right-hand side in the inequality (\ref{eq:L}). 
\begin{lem}{\label{lem:L2}} For any log-concave independent isotropic random vectors $X_0$, $X_1$ in $\R^d$ with smooth densities we have
$$(1-\lambda)M(X_0) + \lambda M(X_1) - M(X_\lambda) \geq \frac{\lambda(1-\lambda)}{2\max (c_0,c_1)}(I(X_0) + K(X_0) +I(X_1)+K(X_1)), $$
where $c_0,c_1$ are the Poincaré constants of $X_0$ and $X_1$, respectively.
\end{lem}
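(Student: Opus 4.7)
The plan is to invoke Lemma~\ref{lem:L}, which reduces the statement to proving that
\[
E := \E\left[\norm{\sigma(X_1) - \sigma(X_0)}^2\right] \;\geq\; \frac{1}{2c}\bigl(I(X_0) + K(X_0) + I(X_1) + K(X_1)\bigr),
\]
where $c := \max(c_0, c_1)$. First I would exploit the independence of $X_0$ and $X_1$ to decompose
\[
E = \norm{A_0 - A_1}^2 + \bigl(K(X_0) - \norm{A_0}^2\bigr) + \bigl(K(X_1) - \norm{A_1}^2\bigr),
\]
with $A_i := \E[\sigma(X_i)]$. An integration by parts against the Gaussian measure, using the isotropy of $X_i$ (as in the proof of Lemma~\ref{lem:L}), shows that $A_i = \E[\nabla\varphi_i(X_i) \otimes \nabla\varphi_i(X_i)]$ is positive semidefinite with $\tr A_i = I(X_i)$, and that $\E[\nabla\varphi_i(X_i)] = 0$.

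Next I would apply the Poincaré inequality in three complementary ways. Applied coordinate-wise to the centered vector field $\nabla\varphi_i$, it yields $I(X_i) \leq c_i K(X_i)$, which is the mechanism by which the factor $1/c$ will arise. Applied to the one-parameter family of linear functionals $v \mapsto v\cdot\nabla\varphi_i(X_i)$, it upgrades to the matrix estimate $A_i \preceq c_i \E[\sigma(X_i)^2]$, from which one extracts the Frobenius bound $\norm{A_i}^2 \leq c_i I(X_i) K(X_i)$. Finally, applied on the product distribution of $(X_0, X_1)$ (whose Poincaré constant is again $c$) to the scalar test function $(x_0, x_1) \mapsto \nabla\varphi_0(x_0) \cdot \nabla\varphi_1(x_1)$ --- which is centered and has $\E[h^2] = \langle A_0, A_1\rangle$ by independence --- it produces the cross-term estimate $\langle A_0, A_1\rangle \leq c\bigl(K(X_0) I(X_1) + K(X_1) I(X_0)\bigr)$.

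The hardest step is the final accounting. Each of these Poincaré bounds is individually slightly too lossy to yield the sharp constant $1/(2c)$: dropping $\norm{A_0 - A_1}^2$ and using only $\norm{A_i}^2 \leq c_i I_i K_i$ is effective only when $c_i I(X_i)$ is small, while using only the cross-term bound on $\langle A_0, A_1\rangle$ degrades when the $K_i$ are large. The decisive step is therefore to retain the term $\norm{A_0 - A_1}^2$ and to combine it with the coordinate-wise and joint Poincaré bounds through a Young-type interpolation, chosen so that the coefficients of $K(X_i)$ and $I(X_i)$ in the resulting lower bound match those on the right-hand side of the target inequality.
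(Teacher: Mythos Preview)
Your reduction via Lemma~\ref{lem:L} is correct, and the decomposition $E = K(X_0)+K(X_1)-2\langle A_0,A_1\rangle$ (which is what your formula collapses to) together with the three Poincaré estimates you list are all valid. The gap is the last paragraph: you never carry out the ``Young-type interpolation'', and in fact the three ingredients you have assembled are too weak to reach the constant $\tfrac{1}{2c}$. Already in the symmetric case $X_0\stackrel{d}{=}X_1$ the target reads $K-\norm{A}^2\ge\tfrac{1}{2c}(I+K)$, i.e.\ $\norm{A}^2\le(1-\tfrac{1}{2c})K-\tfrac{I}{2c}$. Your bounds $\norm{A}^2\le I^2$ and $\norm{A}^2\le cIK$ are both of the wrong order (quadratic in $I,K$) and say nothing when $I,K$ are large; the cross-term bound $\langle A_0,A_1\rangle\le c(K_0I_1+K_1I_0)$ is likewise larger than $K_0+K_1$ in general. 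No convex combination of these inequalities closes the gap, because none of your three test functions contains a linear correction in $x$.

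The paper's argument sidesteps the whole decomposition with one well-chosen test function. Conditioning on $X_1$, apply the Poincaré inequality for $X_0$ to the centered map
\[
x_0\ \longmapsto\ \nabla\varphi_0(x_0)-\nabla^2\varphi_1(X_1)\,x_0 .
\]
Its Jacobian is exactly $\sigma(X_0)-\sigma(X_1)$, so the gradient side is $E$ after averaging in $X_1$; on the variance side, isotropy of $X_0$ makes the cross term vanish and turns $\norm{\nabla^2\varphi_1(X_1)x_0}^2$ into $\norm{\sigma(X_1)}^2$, giving $E\ge \tfrac{1}{c_0}\bigl(I(X_0)+K(X_1)\bigr)$ in one stroke. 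Symmetrize in $(0,1)$ and average to obtain the lemma. The linear term $-\nabla^2\varphi_1(X_1)\,x_0$ is precisely what ties $K(X_1)$ to the Hessian difference; it is the idea missing from your proposal, and without it the accounting cannot be finished.
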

\begin{proof}
We denote by $c_0$ the Poincaré constant of $X_0$. We condition on $X_1$ and apply the Poincaré inequality for $X_0$ to the function $\nabla\varphi_0(X_0) - \nabla^2\varphi_1(X_1)X_0$ which is centered and we use the fact that $X_0$ and $X_1$ are isotropic:
\begin{align*}
    \E\left[ \norm{\nabla^2\varphi_0(X_0) - \nabla^2\varphi_1(X_1)}^2\right] &\geq \frac{1}{c_0} \E\left[\norm{\nabla\varphi_0(X_0) - \nabla^2\varphi_1(X_1)X_0}^2\right] \\
    & = \frac{1}{c_0} \E\left[\norm{\nabla\varphi_0(X_0)}^2 + \norm{\nabla^2\varphi_1(X_1)}\right] \\
    & = \frac{1}{c_0}(I(X_0) + K(X_1)).
\end{align*}
By symmetry we get:
$$\E\left[\norm{\sigma_\gamma(X_1)-\sigma_\gamma(X_0)}^2\right] \geq  \frac{1}{2\max (c_0,c_1)}(I(X_0) + K(X_0) +I(X_1)+K(X_1))$$
Plugging this into Lemma \ref{lem:L} concludes the proof.
\end{proof}

\section{The Ornstein-Uhlenbeck process and proof of the theorems}
Let $X$ be a random vector in $\R^d$ with density $f$ with respect to the \textbf{Gaussian} measure $\gamma$.
Let $L_\gamma$ be the diffusion operator defined by $L_\gamma f(x) = \Delta f(x) - \nabla f(x)\cdot x$. The differential equation associated to $L_\gamma$ is the modified heat equation:

$$\dt{f_t} = L_\gamma f_t \ ;\quad f_0 = f .$$
Its solution $f_t$ is the relative density of the random vector $X_t = e^{-t}X + \sqrt{1-e^{-2t}}G$, where $G$ is a standard Gaussian independent of $X$, with respect to the Gaussian measure. From this description of $X_t$ it is clear that $X_t$ has a $\mathcal{C}^\infty$ density, with integrability properties as good as $f$, and that the process commutes with convolutions, in the sense that, with the notations of the previous sections, for all independent $X_0$, $X_1$ random vectors, and any $\lambda \in [0,1]$, we have
\begin{equation}{\label{eq:commute}}
    (X_\lambda)_t = (X_t)_\lambda \quad \textrm{in law.}
\end{equation}
It is also useful to note that if $X$ is such that $c_X \geq 1$, in particular if $X$ is isotropic, then for all $t\geq0$:
$$c_{X_t} \leq c_X. $$
Indeed, if $X$ and $Y$ are independent random vectors satisfying a Poincaré inequality and $\lambda \in [0,1]$, then using the conditional variance formula, a few computations show that $$c_{\sqrt{\lambda}X + \sqrt{1-\lambda}Y} \leq \lambda c_X +(1 - \lambda)c_Y, $$ which in our case yields $c_{X_t} \leq e^{-2t}c_X + (1-e^{-2t})\times1 \leq c_X$.

We denote by $(P_t)_{t\geq0}$ the semi-group defined by $P_t(f) = f_t$. The following computations are standard (see \cite{bakry2013analysis}): if $X$ has finite entropy, then for $t>0$

$$ \dt{} \Ent(X_t) =  I(X_t) \quad ,\quad \dt{I(X_t)} = -2I(X_t) -2K(X_t) = -M(X_t) -K(X_t).  $$
As a consequence, a linear inequality on the information can be integrated along the semi-group to get the same inequality for the entropy. We also get that $\dt{I(X_t)} \leq -2I(X_t)$ which implies that $I(X_t) \leq e^{-2t}I(X_0)$. Moreover, the control role of $L$ comes from the observation that 

\begin{equation}{\label{eq:deriv_I_exp}}
    e^{2t}\dt{(e^{-2t}I(X_t))} = -2M(X_t).
\end{equation}
Now we are in position to prove Theorem \ref{thm:qbs}, from which Theorem \ref{thm:qss} will be an immediate corollary.

Let $X_0$ and $X_1$ be two isotropic random log-concave vectors in $\R^d$ and $\lambda \in [0,1]$, and $X_\lambda =\sqrt{1-\lambda}X_0 + \sqrt\lambda X_1$. Denote by  $(X_0)_t$, $(X_1)_t$ and $(X_\lambda)_t$ their evolution along the Ornstein-Uhlenbeck semi-group. Further define:
$$I_0(t) = I((X_0)_t) \quad , \quad I_1(t) = I((X_1)_t) \quad , \quad I_\lambda(t) = I((X_\lambda)_t), $$
and similarly, define $K_0(t), K_1(t), K_\lambda(t),M_0(t),M_1(t),M_\lambda(t) $; using (\ref{eq:deriv_I_exp}), the commutation property (\ref{eq:commute}), the observation that the Poincaré constants only decrease along the semi-group and Lemma \ref{lem:L2}, we get the following:
\begin{lem}\label{lem:last_lem} With the previous notations, for all $t\geq 0$,
\begin{equation*}
    \begin{split}
     -\dt{}((1-\lambda)e^{-2t}I_0(t) + & \lambda e^{-2t}I_1(t) - e^{-2t}I_\lambda(t)) \geq \frac{\lambda(1-\lambda)e^{-2t}}{\max(c_0(t),c_1(t))}(I_0(t)+K_0(t)+I_1(t)+K_1(t))  \\
    & \geq -\frac{\lambda(1-\lambda)}{2\max (c_0,c_1)}e^{-2t}\dt{}(I_0(t) + I_1(t)) \\
        \end{split}
\end{equation*}
\end{lem}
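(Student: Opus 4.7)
The plan is to reduce the lemma to two ingredients already in hand: the pointwise identity (\ref{eq:deriv_I_exp}), namely $\dt{(e^{-2t}I(X_t))} = -2e^{-2t}M(X_t)$, and Lemma \ref{lem:L2}, which I would apply at each time $t$ to the time-evolved pair $((X_0)_t, (X_1)_t)$. Differentiating the three terms $(1-\lambda)e^{-2t}I_0(t)$, $\lambda e^{-2t}I_1(t)$ and $e^{-2t}I_\lambda(t)$ separately converts the left-hand side of the first claimed inequality into
\[
2e^{-2t}\bigl[(1-\lambda)M_0(t) + \lambda M_1(t) - M_\lambda(t)\bigr].
\]

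Next I would invoke the commutation property (\ref{eq:commute}), which tells us that $(X_\lambda)_t$ has the same law as $\sqrt{1-\lambda}(X_0)_t + \sqrt{\lambda}(X_1)_t$. Because the map $X \mapsto e^{-t}X + \sqrt{1-e^{-2t}}G$ preserves both isotropy (by a direct covariance computation) and log-concavity (as the sum of two independent log-concave vectors), Lemma \ref{lem:L2} applies to the pair $((X_0)_t, (X_1)_t)$ with Poincaré constants $c_0(t)$ and $c_1(t)$; multiplying the resulting estimate by $2e^{-2t}$ yields the first inequality stated in the lemma.

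For the second inequality, I would use the standard derivative formula $\dt{I(X_t)} = -2(I(X_t) + K(X_t))$ applied to $X_0$ and $X_1$, which gives
\[
-\dt{(I_0(t) + I_1(t))} = 2\bigl(I_0(t) + K_0(t) + I_1(t) + K_1(t)\bigr).
\]
After this substitution the second inequality of the lemma reduces to the scalar comparison $\max(c_0(t), c_1(t)) \leq \max(c_0, c_1)$, which is exactly the monotonicity of the Poincaré constant along the Ornstein-Uhlenbeck semigroup recalled just before the lemma.

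The only substantive point of care is verifying that the hypotheses of Lemma \ref{lem:L2}, namely isotropy and log-concavity of the input vectors together with control of their Poincaré constants, persist along the semigroup; once these are in place the argument becomes purely algebraic bookkeeping on the derivative identities above, and no genuine analytic difficulty remains.
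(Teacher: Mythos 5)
Your proof is correct and follows exactly the route the paper intends: differentiate via (\ref{eq:deriv_I_exp}) to produce $2e^{-2t}[(1-\lambda)M_0(t)+\lambda M_1(t)-M_\lambda(t)]$, apply Lemma \ref{lem:L2} to the time-evolved pair (which stays isotropic and log-concave), and then use $\dt{}I(X_t)=-2(I(X_t)+K(X_t))$ together with the monotonicity of the Poincaré constants. The verification that the hypotheses of Lemma \ref{lem:L2} persist along the semigroup is exactly the right point of care, and your treatment of it is complete.
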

\comment{
\begin{proof}
\begin{align*}
 -\dt{}((1-\lambda)e^{-2t}I_0(t) + \lambda e^{-2t}I_1(t) - e^{-2t}I_\lambda(t)) & = 2e^{-2t}\left[(1-\lambda)L_0(t) + \lambdaM_1(t) -M_\lambda(t)\right] \\
 & \geq \frac{\lambda(1-\lambda)e^{-2t}}{\max(c_0(t),c_1(t)}(I_0(t)+K_0(t)+I_1(t)+K_1(t))\\
& \geq -\frac{\lambda(1-\lambda)e^{-2t}}{2\max (c_0,c_1)}\dt{}(I_0(t) + I_1(t)) \\
\end{align*}
\end{proof}}

\vspace{0.5cm}
\begin{proof}[Proofs of Theorem \ref{thm:qss} and Theorem \ref{thm:qbs}.]
~ \vspace{0.1cm}\\ 
Integrating the inequality of Lemma \ref{lem:last_lem} from $0$ to $\infty$, we get:
\[
\begin{split}
(1-\lambda ) I ( X_0 ) 
& + \lambda I ( X_1 ) - I ( X_\lambda )  \\ 
& \geq - \frac {\lambda(1-\lambda)}{2\max( c_0,c_1)}  \int_0^{+\infty} \e^{-2t} \frac d{dt} 
\left( I_0(t) + I_1(t) \right) \, dt \\
& = \frac {\lambda(1-\lambda)}{2\max( c_0,c_1)} \left(  I (X_0) + I (X_1) 
- 2 \int_0^\infty \e^{-2t} ( I_0(t) + I_1(t)) \, dt  \right)\\
& \geq \frac {\lambda(1-\lambda)}{ 4 \max( C_0,C_1)} \left( I ( X_0 ) + I ( X_1  ) \right)
\end{split}
\]
where in the last inequality, we used the fact that $I_1(t) \leq e^{-2t}I(X_1)$ and $I_0(t) \leq e^{-2t}I(X_0)$. This proves Theorem \ref{thm:qbs}.

Integrating Theorem \ref{thm:qbs} along the semi-group yields Theorem \ref{thm:qss}.
\end{proof}




\bibliography{biblio.bib}
\bibliographystyle{unsrt}

\end{document}